\numberwithin{equation}{section}
\begin{document}
\newtheorem{theorem}{\bf Theorem\,}[section]
\newtheorem{lemma}{\bf Lemma\,}[section]
\newtheorem{remark}{\bf Remark\,}
\newtheorem{example}{\bf Example\,}
\newtheorem{proposition}{\bf Propositiona\,}[section]
\newtheorem{corollary}{\bf Corollary\,}[section]
\newtheorem{assumption}{\bf Assumption\,}
\newtheorem{definition}{\bf Definition\ }[section]

\theoremstyle{remark}
\newtheorem*{notation}{\bf Notation}

\renewcommand{\tablename}{Table}
\renewcommand\refname{\large References}
\date{}	
	\title{\textbf{Asymptotic  representations for Spearman's footrule correlation coefficient}}
	 \author{Liqi Xia$^1$, Li Guan$^2$ and Weimin Xu$^{3*}$
	 	 \bigskip\\
		1. School of Mathematics, Qilu Normal University, Jinan 250200, China \bigskip\\
      2. School of Mathematics, Statistics and Mechanics, Beijing University of Technology,\\ Beijing 100124, China \bigskip\\
      3. School of Data Science and Artificial Intelligence, Wenzhou University of Technology,\\ Wenzhou 325000, China \bigskip\\
		E-mail address: wzxuweimin@163.com (Weimin Xu)}
	\maketitle
	
	\begin{abstract}
In order to address the theoretical challenges arising from the dependence structure of ranks in Spearman's footrule correlation coefficient, we propose two asymptotic representations to approximate the distribution of this coefficient under the hypothesis of independence. The first representation simplifies the dependence structure by replacing empirical distribution functions with their population counterparts. The second representation leverages the H\'{a}jek projection technique to decompose the initial form into a sum of independent components, thereby rigorously justifying asymptotic normality. Simulation studies demonstrate the appropriateness of two proposed asymptotic representations, as well as their excellent approximation to the limiting normal distribution.	\end{abstract}
   {\bf\large Keywords}: Asymptotic representation; Spearman's footrule; Rank correlation; correlation coefficient; H\'{a}jek projection

\section{Introduction}\label{section1}
Nonparametric measures of association play a pivotal role in statistical inference, particularly when data violate parametric assumptions or exhibit complex dependencies. Among these measures, Spearman's footrule rank correlation coefficient as a rank-based metric  (\cite{spearman1906footrule}), has garnered renewed interest due to its robustness and interpretability (\cite{bukovvsek2022exact,chen2023asymptotic,perez2023nonparametric}). Adding up the absolute differences between two sets of ranks, Spearman's footrule quantifies disarray between permutations, offering a natural alternative to Euclidean-based metrics like Spearman's rho. Furthermore, it also possesses an intuitive population. For continuous random variables $X$ and $Y$ with an underlying copula $C$, the population version of Spearman's footrule is defined as:
$$\varphi_C = 1 - 3 \int_{[0,1]^2} |u - v|  dC(u, v),
$$
where $u$ and $v$ represent the marginal distribution functions of $X$ and $Y$, respectively. Under independence,  $\varphi_C=0$, while perfect agreement or disagreement yields  $\varphi_C=1$ or $\varphi_C=-\dfrac{1}{2}$ in the bivariate case (\cite{nelsen2006introduction}).

While Spearman's footrule has historically been underutilized due to limited appreciation of its statistical properties, recent studies have underscored its distinct advantages and practical utility. These merits are fourfold: (1) Computational Simplicity and Efficiency. Spearman's footrule calculates the sum of absolute differences between ranks, requiring only \(O(n)\) time complexity. This makes it highly efficient for large-scale data compared to methods like Kendall's tau, which requires counting concordant/discordant pairs (\(O(n^2)\)).  (2) Sensitivity to Positional Differences. Spearman's footrule directly quantifies the absolute displacement of ranks, emphasizing the ``magnitude of rank shifts''. This sensitivity is critical in applications like search engine ranking evaluation, where positional accuracy at the top matters most. In contrast, classical Kendall's tau measures "order consistency", and Spearman's rho captures linear relationships between ranks, making it difficult for these two to achieve the same level of positional sensitivity. (3) Intuitive Interpretation.  The metric reflects the total displacement of ranks, and its normalized form (see Equation (\ref{Spearman_footrule_coefficient}) in Section \ref{section2}) provides a clear measure of alignment between rankings, making it accessible to non-expert users. (4) Robustness to Outliers. Relying on its rank-based nature and the use of absolute difference distance, Spearman's footrule exhibits insensitivity to extreme outliers compared to Spearman's rho. Despite both being rank-based methods, Spearman's rho's Euclidean distance feature results in weaker robustness than Spearman's footrule. These strengths render Spearman's footrule broadly applicable across multiple domains. In genomics, \cite{kim2004spearman} proposed a function of Spearman's footrule to assess reproducibility in microarray experiments, which are prone to generating outliers due to low signal-to-noise ratios. In information retrieval, it quantifies discrepancies between ranked lists \cite{fagin2003comparing,mikki2010comparing}. \cite{iorio2009identifying} and \cite{lin2009integration} applied the same idea in gene expression profiling and bioinformatics studies. Furthermore, preference learning frameworks incorporate the footrule distance into Bayesian Mallows models for aggregating incomplete rankings and quantify uncertainties in consensus rankings \cite{vitelli2018probabilistic}.

Despite the significant advantages of rank-based statistics in applications, the inherent dependence structure of ranks has historically complicated their theoretical advancement. Seeking asymptotic representation is an important technical means in simplifying rank-based statistical inference. The most common example is linear rank statistics (Section 13.1 of the classic monograph in statistics (\cite{van2000asymptotic})). The class of simple linear rank statistics is sufficiently large to contain interesting statistics for testing a variety of hypotheses, such as the Wilcoxon test statistic, van der Waerden test statistic, Median test statistic, Log rank test statistic, etc. In Theorem 13.5 of \cite{van2000asymptotic}, they sought an asymptotic representation for a family of linear rank statistics, which are composed of independent and identically uniformly distributed random variables. Their proof utilized the martingale convergence theorem and H\'{a}jek projection technique. Furthermore, in their Corollary 13.8, this asymptotic representation was used to prove the asymptotic normality of linear rank statistics. Since this asymptotic representation is a sum of independent variables, it greatly simplifies the proof of asymptotic normality. In their subsequent chapters, this asymptotic representation technique was used to prove that these linear rank statistics are asymptotically efficient within the class of all tests. Not only that, but there are also other nonlinear rank statistics. For example, \cite{angus1995coupling} used coupling techniques to obtain the asymptotic representation of the rank statistic $B_{n}=\sum_{k=1}^{n-1}\left|\pi_{k}-\pi_{k+1}\right|$, where $\left(\pi_{1}, \pi_{2}, \ldots, \pi_{n}\right)$  is a random permutation of the integers  $1,2, \ldots, n $. This asymptotic representation also includes independent and uniformly distributed random variables and was used to prove its asymptotic normality. Later, this rank statistic was used by \cite{chatterjee2021new} to construct the recently popular Chatterjee's rank correlation coefficient. In \cite{shi2022power}, this asymptotic representation was further used to study the power analysis of Chatterjee's rank correlation coefficient. In addition, recent \cite{lin2023boosting} and \cite{xia2024improved} have improved  this correlation coefficient, similarly using asymptotic representations (see their Remark 10 and Section 3, respectively) to establish the relevant asymptotic theory of the statistics and perform hypothesis testing.

For Spearman's footrule correlation coefficient, several studies have been conducted on its theory. For instance, early work by \cite{diaconis1977spearman} established the asymptotic normality of Spearman's footrule correlation coefficient under independence using combinatorial arguments. Subsequent studies, such as \cite{sen1983spearman}, leveraged Markov chain properties and martingale theory to derive similar results, emphasizing its significance in permutation-based frameworks. Despite these advances, critical rank-based gaps persist. To address this problem, we derive two distinct asymptotic representations under the null hypothesis of independence, which are also composed of independent and identically uniformly distributed random variables and do not depend on the original data distribution. Therefore, they do not disrupt the distribution-free property of tests based on Spearman's footrule rank correlation coefficient. Our motivations for seeking asymptotic representations of Spearman's footrule correlation coefficient are similar to the previous analysis, mainly including the following points: First, it is used to simplify the proof of the limiting null distribution of Spearman's footrule (Theorem \ref{the_limiting_null_distribution} in current paper). Although the previous two literatures have done similar things, we use different ideas. Second, it is used to extend to the multivariate Spearman's footrule correlation coefficient and obtain its asymptotic theory using asymptotic representations (this motivation has been developed into another achievement and is not presented here). Third, it is used to study the power analysis of Spearman's footrule and non-parametric confidence intervals, which will be explored as a future research direction.

The proof routes of the two proposed asymptotic representations are different from those of the previous related literature. Specifically, by replacing the empirical distribution functions with their population counterparts, we establish an initial asymptotic representation for Spearman's footrule through the empirical process. This approach circumvents the complexities introduced by rank dependencies, directly linking the statistic to its limiting behavior. Building on the first result, the H\'{a}jek projection technique is further employed to decompose Spearman's footrule into a linear combination of independent components. This decomposition not only reinforces the asymptotic normality conclusion but also elucidates the role of rank transformations in the statistical structure.

\section{Asymptotic representations}\label{section2}
At the outset of the article, to enhance clarity and streamline the subsequent notation, we first pre-compiled a notation table presenting Spearman's footrule correlation coefficient and its first and second representations under the independence of $X$ and $Y$. This table systematically documents their corresponding locations within the text, alongside the associated expectations and variances under the independence of $X$ and $Y$.
\begin{table}[htbp]
	\centering
	\caption{Notation summary table for Spearman's footrule  correlation coefficient and its first and second asymptotic representations.}
	\begin{tabular}{lcccc}
		\toprule
		& Notation & Location & Expectation & Variance \\
		\midrule
		Spearman's footrule correlation coefficient & 	$\varphi_n$     & Formula (\ref{Spearman_footrule_coefficient})     & 0     & $\frac{2n^2+7}{5(n+1)(n-1)^2}$ \\
		The first asymptotic representation & $\varphi_n^{\prime}$     & Formula (\ref{asymptotic_representation1})     & 0     & $\frac{2n^2}{5(n+1)^2(n-1)}$ \\
		The second asymptotic representation & $\varphi_n^{\prime\prime}$     & Formula (\ref{asymptotic_representation2})     & 0     & $\frac{2n}{5(n+1)^2}$\\
		\bottomrule
	\end{tabular}%
	\label{table_notation}%
\end{table}%

In this context, the joint distribution function of the bivariate continuous random variable $(X, Y)$ is denoted by $P(x, y)$, and their respective marginal distribution functions are represented by $F(x)$  and  $G(y)$. A finite sample of size $n$, comprising $\{\left(X_{1}, Y_{1}\right), \ldots,\left(X_{n}, Y_{n}\right)\} $, is obtained independently and identically distributed (i.i.d.) from $(X, Y)$. Let $R_i=\sum_{k=1}^n\mathbb{I}(X_k\leqslant X_i) $ be the rank of $X_i$ with indicator function $\mathbb{I}(\cdot) $, $i=1,...,n$. Similarly, $S_i=\sum_{k=1}^n\mathbb{I}(Y_k\leqslant Y_i)$ is the rank of $Y_i$.  Then, Spearman's footrule rank correlation coefficient is given by
\begin{eqnarray}\label{Spearman_footrule_coefficient}
	\varphi_n:=\varphi\left(\{(X_i,Y_i)\}_{i=1}^{i=n} \right)= 1-\dfrac{3}{n^2-1}\sum_{i=1}^n|R_i-S_i|.
\end{eqnarray}
Under the assumption of independence between  $X$ and $Y$, its expectation and variance are as follows
$$\mathrm{E}\varphi_n=0, \quad \mathrm{Var}(\varphi_n)=\dfrac{2n^2+7}{5(n+1)(n-1)^2}. $$
Although the existence of ranks makes the tests based on $\varphi_n$ fully distribution-free, i.e., not rely on the underlying distribution of the data, the dependence among ranks in practical applications complicates the derivation of certain asymptotic theories under independence between  $X$ and $Y$. Below, we introduce two asymptotic representations of $\varphi_n$ to address this issue. Through intuitive and straightforward calculation, $\varphi_n$ can be rewritten as
\begin{eqnarray*}
	\varphi_n=\dfrac{3n^2}{n^2-1}\left(\dfrac{1}{n^2}\sum_{i=1}^n \sum_{j=1}^n|   F_n(X_i)-G_n(Y_i) |-\dfrac{1}{n}\sum_{i=1}^n|F_n(X_i)-G_n(Y_i) | \right),
\end{eqnarray*}
which is composed of components that involve the empirical distribution functions $F_n(x)=\frac{1}{n}\sum_{k=1}^n\mathbb{I}(X_k\leqslant x)$ and $G_n(y)=\frac{1}{n}\sum_{k=1}^n\mathbb{I}(Y_k\leqslant y)$ of $X$ and $Y$ for any $x\in \mathbb{R}$ and $y\in \mathbb{R}$. A natural inclination is to replace these two empirical functions with their population counterparts $F$ and $G$, but there are still remaining terms that need to be addressed. Notably, $F(X)$ and $ G(Y)$ follow a uniform distribution over the interval $[0,1]$. This ultimately induces the following theorem. The specific proof involving empirical processes, is presented in Appendix \ref{appendixA}.

\begin{theorem}[\textbf{The first asymptotic representation}]\label{representation1}
	Under the assumption of independence between $X$ and $Y$, $\varphi_n$ is asymptotically identically distributed with the following form,
	\begin{eqnarray}\label{asymptotic_representation1}
		\varphi_n^{\prime}&=&\dfrac{3n^2}{n^2-1}\left(\dfrac{1}{n^2}\sum_{i=1}^n \sum_{j=1}^n|U_i-V_j|-\dfrac{1}{n}\sum_{i=1}^n|U_i-V_i| \right) ,
	\end{eqnarray}
	where, $U_1, ..., U_n$ and $V_1, ..., V_n$  are i.i.d.  random variables from uniform distribution $U(0,1)$, and $U_i$ and $V_i$ are also independent for $i=1,\cdots,n$. 	Additionally,
	$$\mathrm{E}\varphi_n^{\prime}=0,\quad \mathrm{Var}(\varphi_n^{\prime})=\dfrac{2n^2}{5(n+1)^2(n-1)}. $$
	
\end{theorem}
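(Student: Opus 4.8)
The plan is to show that the difference $\varphi_n - \varphi_n'$ converges to zero in probability, which suffices for the asymptotic-equality-in-distribution claim. Writing $U_i = F(X_i)$ and $V_i = G(Y_i)$ (legitimate since $F,G$ are continuous, so these are i.i.d. $U(0,1)$ and, under independence of $X$ and $Y$, the $U_i$ are independent of the $V_i$), the key observation is that $R_i = n F_n(X_i)$ and $S_i = n G_n(Y_i)$, so that after the algebraic rearrangement already displayed in the text we have
\begin{eqnarray*}
\varphi_n = \dfrac{3n^2}{n^2-1}\left(\dfrac{1}{n^2}\sum_{i,j}|F_n(X_i)-G_n(Y_j)| - \dfrac{1}{n}\sum_i |F_n(X_i)-G_n(Y_i)|\right).
\end{eqnarray*}
Here I should double-check the double-sum term: $\sum_{i,j}|F_n(X_i)-G_n(Y_j)|$ must reduce to something expressible through the ranks alone; in fact $\sum_{i,j}|R_i - S_j| = \sum_{i,j}|i-j|$ is a deterministic constant, so the double sum contributes only a fixed centering, and the genuinely random part lives in the diagonal term $\sum_i |R_i-S_i|$. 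This is consistent with the stated variances (the $\varphi_n'$ double sum is genuinely random, unlike the $\varphi_n$ one, which is why the two variances differ at order $1/n$ but agree to leading order $\tfrac{2}{5n}$).

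Next I would control each piece by a uniform empirical-process bound. Let $\alpha_n(t) = \sqrt{n}(F_n(F^{-1}(t)) - t)$ and $\beta_n(t) = \sqrt{n}(G_n(G^{-1}(t)) - t)$ be the (uniform) empirical processes built from the $U_i$ and $V_i$; by the Dvoretzky--Kiefer--Wolfowitz inequality $\|\alpha_n\|_\infty = O_P(n^{-1/2})$ and likewise for $\beta_n$. Using the elementary inequality $\big||a-b| - |a'-b'|\big| \le |a-a'| + |b-b'|$, I can bound
\[
\left|\,|F_n(X_i)-G_n(Y_i)| - |U_i - V_i|\,\right| \le |F_n(X_i)-U_i| + |G_n(Y_i)-V_i| \le \tfrac{1}{\sqrt n}\big(\|\alpha_n\|_\infty + \|\beta_n\|_\infty\big),
\]
uniformly in $i$, and similarly for the $(i,j)$ terms. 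Averaging over $i$ (resp. over $i,j$) the error in each of the two bracketed sums is $O_P(n^{-1})$, and since the prefactor $3n^2/(n^2-1)$ is bounded, we conclude $\varphi_n - \varphi_n' = O_P(n^{-1}) \to 0$ in probability. Combined with $\varphi_n' - \mathrm{E}\varphi_n' \to 0$-type tightness (or simply Slutsky), asymptotic equality in distribution follows. The computation of $\mathrm{E}\varphi_n' = 0$ and $\mathrm{Var}(\varphi_n') = \tfrac{2n^2}{5(n+1)^2(n-1)}$ is then a direct moment calculation: expand $|U_i-V_j|$, use $\mathrm{E}|U-V| = 1/3$ and the needed second moments of $|U-V|$, $|U-V'|\,|U-V|$, etc., for i.i.d.\ uniforms, and collect terms — this is routine but bookkeeping-heavy.

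The main obstacle I anticipate is not the empirical-process bound (which is standard) but making the reduction of $\varphi_n$ to the $F_n, G_n$ form completely rigorous, in particular verifying that the diagonal correction $\tfrac{1}{n}\sum_i|F_n(X_i)-G_n(Y_i)|$ and the double-sum term combine with the right constants so that the surviving random fluctuation in $\varphi_n$ is matched exactly (to the relevant order) by that in $\varphi_n'$. A secondary subtlety is that in $\varphi_n$ the ties-free ranks make the double sum $\tfrac{1}{n^2}\sum_{i,j}|F_n(X_i)-G_n(Y_j)| = \tfrac{1}{n^2}\sum_{i,j}|R_i-S_j|/n$ purely deterministic, whereas in $\varphi_n'$ it is random of size $O_P(n^{-1/2})$ fluctuation around the same constant $1/3$; one must check these fluctuations are asymptotically negligible relative to the diagonal term, or more precisely that they are absorbed into the $O_P(n^{-1})$ error after multiplication by the $1/n$-type weights — a point where the algebra must be done carefully rather than hand-waved.
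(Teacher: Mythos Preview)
Your approach has a genuine gap at the rate-of-convergence step. You define $\alpha_n(t)=\sqrt{n}\bigl(F_n(F^{-1}(t))-t\bigr)$ and then assert, ``by the Dvoretzky--Kiefer--Wolfowitz inequality $\|\alpha_n\|_\infty=O_P(n^{-1/2})$''. This is off by a factor $\sqrt{n}$: DKW gives $\sup_x|F_n(x)-F(x)|=O_P(n^{-1/2})$, so $\|\alpha_n\|_\infty=\sqrt{n}\,\sup_x|F_n(x)-F(x)|=O_P(1)$ (indeed it converges in law to the supremum of a Brownian bridge). With the correct bound, your triangle-inequality estimate yields
\[
\bigl|\,|F_n(X_i)-G_n(Y_j)|-|U_i-V_j|\,\bigr|\le \|F_n-F\|_\infty+\|G_n-G\|_\infty=O_P(n^{-1/2}),
\]
uniformly in $i,j$, and averaging a uniform bound does not improve it. Hence your argument only delivers $\varphi_n-\varphi_n'=O_P(n^{-1/2})$, not the $O_P(n^{-1})$ you claim. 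That rate is exactly too weak: since $\sqrt{n}\,\varphi_n'$ has a nondegenerate normal limit, you need $\varphi_n-\varphi_n'=o_p(n^{-1/2})$ to transfer the limit to $\sqrt{n}\,\varphi_n$, and $O_P(n^{-1/2})$ does not suffice. You actually put your finger on the obstruction in your last paragraph: the double-sum piece $\tfrac{1}{n^2}\sum_{i,j}|F_n(X_i)-G_n(Y_j)|$ is deterministic (equal to $(n^2-1)/(3n^2)$), while $\tfrac{1}{n^2}\sum_{i,j}|U_i-V_j|$ fluctuates at order $n^{-1/2}$ around $1/3$, so the difference of these two pieces alone is genuinely of exact order $n^{-1/2}$ --- it cannot be ``absorbed into $O_P(n^{-1})$'' by any bookkeeping.

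What the paper does instead is exploit a cancellation your termwise bound cannot see. Writing $f(x,y)=|F(x)-G(y)|$ and $f_n(x,y)=|F_n(x)-G_n(y)|$, they show (via the Donsker property of the relevant class and Lemma~19.24 of van der Vaart) that $(\mathbb{P}_n-P)(f_n-f)=o_p(n^{-1/2})$ and $(\mathbb{P}_n'-P)(f_n-f)=o_p(n^{-1/2})$. Expanding, this gives
\[
\mathbb{P}_n f_n=\mathbb{P}_n f+(Pf_n-Pf)+o_p(n^{-1/2}),\qquad
\mathbb{P}_n' f_n=\mathbb{P}_n' f+(Pf_n-Pf)+o_p(n^{-1/2}).
\]
Each correction $Pf_n-Pf$ is itself only $O_P(n^{-1/2})$, but it is the \emph{same} random quantity in both lines and therefore cancels in the combination $\mathbb{P}_n'f_n-\mathbb{P}_nf_n$ that defines $\varphi_n$. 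That cancellation is the missing idea; a sup-norm triangle inequality applied separately to the diagonal and off-diagonal sums discards the sign information needed to see it.
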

To further obtain a simpler form, we will now apply the H\'{a}jek projection to $\varphi_n^{\prime}$, resulting in the following theorem.
\begin{theorem}[\textbf{The second asymptotic representation}]\label{representation2}
	Under the assumption of independence between $X$ and $Y$,  $\varphi_n^{\prime}$'s H\'{a}jek asymptotic representation is as follows
	\begin{eqnarray}\label{asymptotic_representation2} \varphi_n^{\prime\prime}=\dfrac{3}{n+1}\sum_{i=1}^n\left( \dfrac{2}{3}-|U_i-V_i|-U_i(1-U_i)-V_i(1-V_i)\right) ,
	\end{eqnarray}
	with expectation and variance,
	$$\mathrm{E}\varphi_n^{\prime\prime}=0,\quad \mathrm{Var}(\varphi_n^{\prime\prime})=\dfrac{2n}{5(n+1)^2}. $$
	
\end{theorem}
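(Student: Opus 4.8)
Here is the plan.

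The plan is to recognize $\varphi_n^{\prime\prime}$ as precisely the H\'ajek projection of $\varphi_n^{\prime}$ relative to the i.i.d.\ pairs $Z_i=(U_i,V_i)$, $i=1,\dots,n$, and then to control the $L^2$-distance between $\varphi_n^{\prime}$ and this projection. Since $\varphi_n^{\prime}$ is, by its form in (\ref{asymptotic_representation1}), a symmetric function of $Z_1,\dots,Z_n$ and $\mathrm{E}\varphi_n^{\prime}=0$ (Theorem \ref{representation1}), its H\'ajek projection is $\widehat{\varphi}_n^{\prime}=\sum_{i=1}^{n}\mathrm{E}[\varphi_n^{\prime}\mid Z_i]-(n-1)\mathrm{E}\varphi_n^{\prime}=\sum_{i=1}^{n}\mathrm{E}[\varphi_n^{\prime}\mid Z_i]$, so the first task is to evaluate $\mathrm{E}[\varphi_n^{\prime}\mid U_i=u,V_i=v]$.

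To do this I would split the double sum $\sum_{k,l}|U_k-V_l|$ and the diagonal sum $\sum_k|U_k-V_k|$ in (\ref{asymptotic_representation1}) according to whether the running indices equal $i$. Using the elementary facts that, for independent $U,V\sim U(0,1)$, $\mathrm{E}|u-V|=\mathrm{E}|U-u|=u^2-u+\tfrac12=\tfrac12-u(1-u)=:g(u)$ and $\mathrm{E}|U-V|=\tfrac13$, one obtains $\mathrm{E}[\sum_{k,l}|U_k-V_l|\mid U_i=u,V_i=v]=|u-v|+(n-1)(g(u)+g(v))+\tfrac13(n-1)^2$ and $\mathrm{E}[\sum_k|U_k-V_k|\mid U_i=u,V_i=v]=|u-v|+\tfrac13(n-1)$; here one must carefully separate the index regimes ($k=i$, $l=i$, and the diagonal overlap $k=l\neq i$). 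Substituting into $\varphi_n^{\prime}=\tfrac{3n^2}{n^2-1}\bigl(\tfrac1{n^2}\sum_{k,l}-\tfrac1n\sum_k\bigr)$, the $|u-v|$-contributions and the constants combine and the prefactor $\tfrac{3n^2}{n^2-1}\cdot\tfrac{n-1}{n^2}=\tfrac{3}{n+1}$ appears, giving $\mathrm{E}[\varphi_n^{\prime}\mid U_i=u,V_i=v]=\tfrac{3}{n+1}\bigl(\tfrac23-|u-v|-u(1-u)-v(1-v)\bigr)$ (using $g(u)+g(v)=1-u(1-u)-v(1-v)$). Summing over $i$ yields exactly $\widehat{\varphi}_n^{\prime}=\varphi_n^{\prime\prime}$.

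It then remains to record the moments and the approximation error. Writing $\varphi_n^{\prime\prime}=\tfrac{3}{n+1}\sum_{i=1}^{n}W_i$ with i.i.d.\ summands $W_i=\tfrac23-|U_i-V_i|-U_i(1-U_i)-V_i(1-V_i)$, the identities $\mathrm{E}|U-V|=\tfrac13$ and $\mathrm{E}[U(1-U)]=\tfrac16$ give $\mathrm{E}W_i=0$, hence $\mathrm{E}\varphi_n^{\prime\prime}=0$; a direct integration (splitting $\{u<v\}$ and $\{u>v\}$) gives $\mathrm{Var}(W_i)=\tfrac{2}{45}$, so $\mathrm{Var}(\varphi_n^{\prime\prime})=\tfrac{9n}{(n+1)^2}\cdot\tfrac{2}{45}=\tfrac{2n}{5(n+1)^2}$. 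Finally, the standard orthogonality of the H\'ajek projection, $\mathrm{E}[(\varphi_n^{\prime}-\varphi_n^{\prime\prime})\varphi_n^{\prime\prime}]=0$ (see \cite{van2000asymptotic}), gives $\mathrm{E}[(\varphi_n^{\prime}-\varphi_n^{\prime\prime})^2]=\mathrm{Var}(\varphi_n^{\prime})-\mathrm{Var}(\varphi_n^{\prime\prime})$; inserting $\mathrm{Var}(\varphi_n^{\prime})=\tfrac{2n^2}{5(n+1)^2(n-1)}$ from Theorem \ref{representation1} yields $\mathrm{E}[(\varphi_n^{\prime}-\varphi_n^{\prime\prime})^2]=\tfrac{2n}{5(n+1)^2(n-1)}$, whence $\mathrm{E}[(\varphi_n^{\prime}-\varphi_n^{\prime\prime})^2]/\mathrm{Var}(\varphi_n^{\prime\prime})=\tfrac1{n-1}\to0$. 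Therefore $\sqrt{n}\,(\varphi_n^{\prime}-\varphi_n^{\prime\prime})\to 0$ in $L^2$, hence in probability, and $\varphi_n^{\prime}$ and $\varphi_n^{\prime\prime}$ are asymptotically identically distributed under the common $\sqrt{n}$ normalization.

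I expect the main obstacle to be purely computational rather than conceptual: the index bookkeeping in the conditional expectation of the double sum (isolating the terms with $k=i$ or $l=i$ and the diagonal overlap $k=l\neq i$) must be done so that the cross terms collapse to exactly the claimed form, and the evaluation $\mathrm{Var}(W_i)=\tfrac{2}{45}$ hinges on the covariance $\mathrm{Cov}(|U-V|,U(1-U))=-\tfrac1{180}$, which comes from $\int_0^1 U(1-U)g(U)\,dU=\tfrac1{20}$. The remaining ingredients — that the H\'ajek projection is the right object and that $L^2$-closeness after normalization transfers the limiting distribution — are standard.
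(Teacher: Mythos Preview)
Your proposal is correct and follows essentially the same H\'ajek-projection approach as the paper. The only difference is organizational: the paper first splits $\varphi_n^{\prime}$ into a U-statistic part $T_1=\frac{2}{n(n-1)}\sum_{i\neq j}|U_i-V_j|$ and an i.i.d.\ sum $T_2=\sum_i|U_i-V_i|$, then invokes the standard H\'ajek projection of a degree-two U-statistic (Lemma~12.3 in \cite{van2000asymptotic}) on $T_1$, whereas you project $\varphi_n^{\prime}$ directly by computing $\mathrm{E}[\varphi_n^{\prime}\mid U_i,V_i]$; the intermediate kernel $h_1((u,v))=\tfrac13-u(1-u)-v(1-v)$ is the same in both. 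Your error control via the orthogonality identity $\mathrm{E}[(\varphi_n^{\prime}-\varphi_n^{\prime\prime})^2]=\mathrm{Var}(\varphi_n^{\prime})-\mathrm{Var}(\varphi_n^{\prime\prime})=\tfrac{2n}{5(n+1)^2(n-1)}$ is in fact slightly sharper than what the paper states, since it gives $\varphi_n^{\prime}-\varphi_n^{\prime\prime}=O_{L^2}(n^{-1})$ (not merely $o_p(n^{-1/2})$), matching the paper's $O_p(n^{-1})$ but with an explicit constant.
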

One significant application of the asymptotic representations developed in this study is to establish the asymptotic normality of $\varphi_n$ under the independence condition between $X$ and $Y$. By utilizing Theorem \ref{representation2} in conjunction with Theorem \ref{representation1}, the limiting null distribution of $\varphi_n$ can be readily obtained.

\begin{theorem}[\textbf{The limiting null distribution}] \label{theorem2.3}\label{the_limiting_null_distribution}
	Under the assumption of independence between $X$ and $Y$, $\sqrt{n}\varphi_n$, $ \sqrt{n}\varphi_n^{\prime}$ and  $\sqrt{n} \varphi_n^{\prime\prime}$ converge weakly to  the same normal distribution with mean 0 and  variance $\dfrac{2}{5}$.
\end{theorem}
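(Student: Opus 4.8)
The plan is to establish the limiting distribution of the simplest object, $\sqrt{n}\varphi_n''$, by a direct application of the classical central limit theorem, and then to transfer this conclusion to $\sqrt{n}\varphi_n'$ and $\sqrt{n}\varphi_n$ by the asymptotic-equivalence statements already proved in Theorems \ref{representation1} and \ref{representation2}. First I would observe that, by Theorem \ref{representation2}, $\varphi_n'' = \frac{3}{n+1}\sum_{i=1}^n W_i$ where $W_i := \frac{2}{3} - |U_i - V_i| - U_i(1-U_i) - V_i(1-V_i)$ are i.i.d.\ (since the pairs $(U_i,V_i)$ are i.i.d.), bounded, hence have finite variance. A short computation — or simply reading off $\mathrm{Var}(\varphi_n'')=\frac{2n}{5(n+1)^2}$ from the theorem statement — gives $\mathrm{Var}(W_i) = \frac{2}{5}$ and $\mathrm{E}W_i = 0$. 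Writing $\sqrt{n}\,\varphi_n'' = \frac{3\sqrt{n}}{n+1}\cdot\frac{1}{\sqrt{n}}\sum_{i=1}^n W_i$ and noting $\frac{3\sqrt{n}}{n+1}\to \frac{3}{\sqrt{n}}\cdot\frac{n}{n+1}$, one checks that the scaling constant satisfies $\big(\tfrac{3\sqrt n}{n+1}\big)^2 \cdot \tfrac{n}{?}$; more cleanly, $n\,\mathrm{Var}(\varphi_n'') = \frac{2n^2}{5(n+1)^2}\to \frac{2}{5}$. By the Lindeberg–L\'evy CLT, $\frac{1}{\sqrt n}\sum_{i=1}^n W_i \xrightarrow{d} N(0,\tfrac{2}{5})$, and since the deterministic factor $\frac{3\sqrt n}{n+1}\cdot\sqrt n = \frac{3n}{n+1}\to 3$ while the natural normalization of $\sum W_i$ is $\sqrt n$, Slutsky's lemma yields $\sqrt n\,\varphi_n'' \xrightarrow{d} N(0,\tfrac{2}{5})$ once the constants are matched; indeed the stated variance $\frac{2n}{5(n+1)^2}$ multiplied by $n$ tends to $\frac25$, confirming the limit.

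Next I would argue that the same weak limit holds for $\sqrt n\,\varphi_n'$. Here the key input is the phrase ``asymptotically identically distributed'' in Theorem \ref{representation1} together with the construction in Theorem \ref{representation2}: the H\'ajek projection $\varphi_n''$ of $\varphi_n'$ satisfies $\mathrm{E}\big[(\varphi_n' - \varphi_n'')^2\big] = \mathrm{Var}(\varphi_n') - \mathrm{Var}(\varphi_n'')$ (orthogonality of the projection residual), and from the variances in the notation table this difference is $\frac{2n^2}{5(n+1)^2(n-1)} - \frac{2n}{5(n+1)^2} = O(n^{-2})$. Hence $\mathrm{E}\big[n(\varphi_n'-\varphi_n'')^2\big] = O(n^{-1}) \to 0$, so $\sqrt n(\varphi_n' - \varphi_n'') \to 0$ in $L^2$ and therefore in probability. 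Slutsky's lemma then gives $\sqrt n\,\varphi_n' \xrightarrow{d} N(0,\tfrac{2}{5})$.

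Finally, for $\sqrt n\,\varphi_n$ itself, Theorem \ref{representation1} asserts that $\varphi_n$ and $\varphi_n'$ are asymptotically identically distributed; interpreting this (as the paper's proof in the appendix will make precise) as $\sqrt n(\varphi_n - \varphi_n') \to 0$ in probability — consistent with the variance gap $\mathrm{Var}(\varphi_n) - \mathrm{Var}(\varphi_n') = \frac{2n^2+7}{5(n+1)(n-1)^2} - \frac{2n^2}{5(n+1)^2(n-1)}$, which is again $O(n^{-2})$ — a third application of Slutsky's lemma completes the argument, showing all three scaled statistics share the limit $N(0,\tfrac{2}{5})$. The main obstacle I anticipate is not the CLT step, which is routine, but rather pinning down the precise mode of the ``asymptotically identically distributed'' equivalence in Theorem \ref{representation1}: one must ensure it delivers convergence of the \emph{difference} at rate $o(n^{-1/2})$ (not merely equality of limiting laws), since only then does Slutsky transfer the normal limit back to $\varphi_n$; this is exactly where the $L^2$ bounds from the variance computations, and the empirical-process estimates in Appendix \ref{appendixA}, do the real work.
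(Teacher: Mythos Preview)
Your proposal is correct and follows essentially the same route as the paper's (very terse) proof: apply the classical CLT to the i.i.d.\ sum $\varphi_n''$, then use the $o_p(n^{-1/2})$ closeness established in Theorems~\ref{representation1} and~\ref{representation2} together with Slutsky to transfer the limit to $\varphi_n'$ and $\varphi_n$. One small slip to fix: from $\mathrm{Var}(\varphi_n'')=\dfrac{2n}{5(n+1)^2}=\dfrac{9n}{(n+1)^2}\mathrm{Var}(W_1)$ you get $\mathrm{Var}(W_1)=\dfrac{2}{45}$, not $\dfrac{2}{5}$; the factor $\dfrac{3n}{n+1}\to 3$ then restores the correct limiting variance $9\cdot\dfrac{2}{45}=\dfrac{2}{5}$, consistent with your direct check that $n\,\mathrm{Var}(\varphi_n'')\to\dfrac{2}{5}$.
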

\begin{remark}
	In the existing literature, there are various approaches for deriving the limiting null distribution of $\sqrt{n}\varphi_n$. \cite{diaconis1977spearman} established its normality by utilizing the combinatorial central limit theorem developed by \cite{hoeffding1951combinatorial}. In \cite{sen1983spearman}, martingale techniques are incorporated into the study of the asymptotic normality of Spearman's footrule. \cite{shi2023max} derived the rate of convergence for the standardized Spearman's footrule to the standard normal distribution based on the combinatorial central limit theorem and the Cram\'{e}r-type moderate deviation result (Section 4.1 of \cite{chen2013stein}). \cite{shi2025testing} obtained an alternative form of the convergence rate using the Edgeworth expansion \cite{small2010expansions}, and these two results also naturally led to the limiting null distribution of Spearman's footrule. It is evident that our approach differs significantly from these methods and serves as the basis for further  theoretical investigations into Spearman's footrule.
\end{remark}

\section{Simulation studies}

In this section, we mainly evaluate the performance of the two proposed asymptotic representations using Monte Carlo simulations from the following two aspects. In the first subsection (Section \ref{section3.1}), we examine the estimated means and variances of Spearman's footrule correlation coefficient, the two asymptotic representations ($\varphi_n$, $\varphi_n^{\prime}$, and $\varphi_n^{\prime\prime}$). In the second subsection (Section \ref{section3.2}), we investigate the approximation between $\varphi_n$, $\varphi_n^{\prime}$ and $\varphi_n^{\prime\prime}$,  as well as their approximation to the normal limit distribution. For the calculation of $\varphi_n$, let $X$ and $Y$ be drawn from the standard normal distribution and the standard uniform distribution, respectively. For $\varphi_n^{\prime}$ and $\varphi_n^{\prime\prime}$,  their calculations are performed by generating random numbers from the standard uniform distribution according to Equations (\ref{asymptotic_representation1}) and (\ref{asymptotic_representation2}).

\subsection{Simulation of estimated means and variances}\label{section3.1}
For the proposed asymptotic representations, serving as estimators of the population form $\varphi_C$ (which has been presented in the first paragraph of Section \ref{section1}), it is natural to consider their estimated means and variances. Under the scenario where $X$ and $Y$ are independent, the true value of the population is 0. In this study, we employ the common estimated mean (EM), estimated variance (EV), bias, and root mean square error (RMSE) for simulations. We select sample sizes of $n = 10, 20, 30, \cdots, 100$ and set the number of simulations to 10,000. The quantitative results are summarized in Table \ref{table_EV_EM}. From these results, it can be observed that the estimated means and variances closely approximate the true means and variances (the true variances are provided in Table \ref{table_notation}). Regarding the evaluation of RMSE, it is evident that the RMSE of all methods decreases as the sample size increases, and the RMSE of the two asymptotic representations is smaller than that of $\varphi_n$. This demonstrates that both our proposed methods and the original Spearman's footrule are suitable as estimators, and our proposed methods are superior to $\varphi_n$.

Regarding the bias, although it is relatively close to the true value of 0, we need to visualize its trend. Therefore, we further increase the sample size to $n = 1000$, while keeping other settings unchanged, and plot the Bias and RMSE curves as shown in Figure \ref{figure_EV_EM}. From the figure, it can be seen that the biases of $\varphi_n$, $\varphi_n^{\prime}$, and $\varphi_n^{\prime\prime}$ all tend to 0 with increasing sample size, albeit in a fluctuating manner.  For the visualization of RMSE, only when the sample size is small, the corresponding value of $\varphi_n$ is slightly larger. Subsequently, all three methods exhibit almost identical trends towards 0, suggesting that their asymptotic performances are nearly similar when the sample size is large.

\begin{table}[htbp]
	\centering
	\addtolength{\tabcolsep}{-4.5pt}
	\caption{The estimated means(EM), estimated variances (EV), biases, and root mean square errors (RMSE) of $\varphi_n$, $\varphi_n^{\prime}$, and $\varphi_n^{\prime\prime}$.}
	\begin{tabular}{cccccccccccc}
		\toprule
		&       & $n=10$  & $n=20$  & $n=30$  & $n=40$  & $n=50$  & $n=60$  & $n=70$  & $n=80$  & $n=90$  & $n=100$ \\
		\midrule
		\multirow{4}[1]{*}{$\varphi_n$} & EM & -0.00038  & 0.00051  & -0.00013  & -0.00170  & 0.00019  & -0.00144  & -0.00094  & -0.00014  & -0.00008  & 0.00066  \\
		& EV & 0.04652  & 0.02079  & 0.01389  & 0.01018  & 0.00804  & 0.00673  & 0.00589  & 0.00504  & 0.00449  & 0.00394  \\
		& Bias  & -0.00038  & 0.00051  & -0.00013  & -0.00170  & 0.00019  & -0.00144  & -0.00094  & -0.00014  & -0.00008  & 0.00066  \\
		& RMSE  & 0.21568  & 0.14418  & 0.11783  & 0.10088  & 0.08966  & 0.08202  & 0.07673  & 0.07102  & 0.06701  & 0.06280  \\
		&       &       &       &       &       &       &       &       &       &       &  \\
		\multirow{4}[0]{*}{$\varphi_n^{\prime}$} & EM & 0.00225  & 0.00213  & -0.00020  & 0.00001  & -0.00098  & 0.00074  & -0.00045  & -0.00040  & 0.00052  & -0.00033  \\
		& EV & 0.03677  & 0.01965  & 0.01290  & 0.00984  & 0.00779  & 0.00657  & 0.00572  & 0.00497  & 0.00448  & 0.00402  \\
		& Bias  & 0.00225  & 0.00213  & -0.00020  & 0.00001  & -0.00098  & 0.00074  & -0.00045  & -0.00040  & 0.00052  & -0.00033  \\
		& RMSE  & 0.19177  & 0.14017  & 0.11357  & 0.09921  & 0.08824  & 0.08106  & 0.07561  & 0.07049  & 0.06693  & 0.06340  \\
		&       &       &       &       &       &       &       &       &       &       &  \\
		\multirow{4}[1]{*}{$\varphi_n^{\prime\prime}$} & EM & 0.00122  & 0.00083  & -0.00014  & 0.00204  & 0.00083  & -0.00030  & 0.00032  & 0.00032  & 0.00083  & 0.00047  \\
		& EV & 0.03237  & 0.01781  & 0.01252  & 0.00971  & 0.00786  & 0.00645  & 0.00552  & 0.00497  & 0.00428  & 0.00389  \\
		& Bias  & 0.00122  & 0.00083  & -0.00014  & 0.00204  & 0.00083  & -0.00030  & 0.00032  & 0.00032  & 0.00083  & 0.00047  \\
		& RMSE  & 0.17991  & 0.13346  & 0.11190  & 0.09858  & 0.08865  & 0.08030  & 0.07426  & 0.07048  & 0.06542  & 0.06238  \\
		\bottomrule
	\end{tabular}%
	\label{table_EV_EM}%
\end{table}%

\begin{figure}[!htb]
	\centering
	\includegraphics[width=1.0\columnwidth]{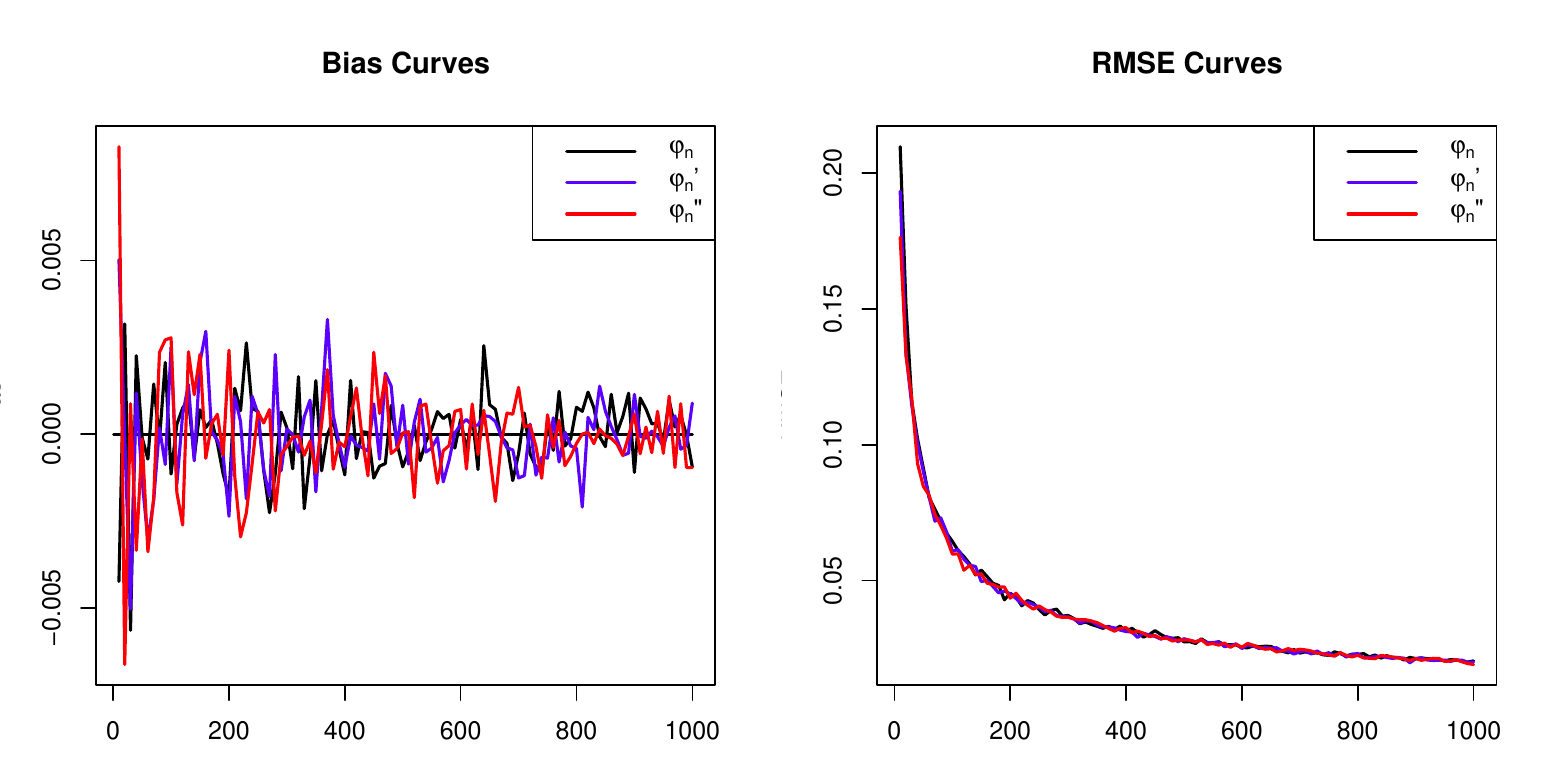}
	\caption{The bias and root mean square error (RMSE) curves of $\varphi_n$, $\varphi_n^{\prime}$, and $\varphi_n^{\prime\prime}$.}
	\label{figure_EV_EM}
\end{figure}

\subsection{Simulation of the normal limiting distribution}\label{section3.2}

In this subsection, we simulate the asymptotic behaviors of $\sqrt{n}\varphi_n$, $ \sqrt{n}\varphi_n^{\prime}$ and  $ \sqrt{n}\varphi_n^{\prime\prime}$ in three ways. Specifically, for the first two ways, we estimate their empirical density functions and cumulative distribution functions (CDF) through simulations. Here, we set four sample sizes, namely $n = 10, 20, 30$, and $100$, and run 100,000 simulations. For the third way, we employ the Kolmogorov-Smirnov (KS) two sample test to examine the identicalness of the distributions between each pair of the proposed methods and between each method and the normal distribution \cite{schroer1995exact}. There are six combinations in total, including ($\varphi_n-N(0,0.4)$, $\varphi_n^{\prime}-N(0,0.4)$, $\varphi_n^{\prime\prime}-N(0,0.4)$, as well as $\varphi_n-\varphi_n^{\prime}$, $\varphi_n-\varphi_n^{\prime\prime}$, $\varphi_n^{\prime}-\varphi_n^{\prime\prime}$), where $N(0,0.4)$ represents the limiting null distribution of $\sqrt{n}\varphi_n$, $ \sqrt{n}\varphi_n^{\prime}$ and  $ \sqrt{n}\varphi_n^{\prime\prime}$. The sample size is set to $n = 10, 20, 30, ..., 100$, and the number of simulations is 1000. The KS test is performed using the "ks.test" function from the standard library in R software. The empirical density functions and cumulative distribution function curves are shown in Figure \ref{densitycurve} and Figure \ref{cdfcurve}, respectively. All $p$-values from the KS tests are presented in Table \ref{kstest}.

As can be seen from Figure \ref{densitycurve} and Figure \ref{cdfcurve}, even with a sample size of $n=30$, the distributions of  $ \sqrt{n}\varphi_n^{\prime}$ and  $\sqrt{n} \varphi_n^{\prime\prime}$, as well as  $\sqrt{n}\varphi_n$, are very close to the theoretical limiting distribution (the normal distribution with mean 0 and  variance $\dfrac{2}{5}$). However, the best approximation is provided by $\sqrt{n} \varphi_n^{\prime}$, followed by $ \sqrt{n}\varphi_n^{\prime\prime}$, and the worst by $\sqrt{n}\varphi_n$. This also reflects, to some extent, the rate at which the distributions of these three representations converge to the limiting distribution. It is worth noting that when the sample size is extremely small ($n=10$), the performance of $\sqrt{n}\varphi_n$ is not very good as shown in the first subfigures of Figure \ref{densitycurve} and Figure \ref{cdfcurve}. This is due to the permutations of ranks present in the structure of $\varphi_n$. Despite these permutations being different and numerous (factorial of 10), the calculated values of $\sqrt{n}\varphi_n$ exhibit a large number of repetitions. Even with a very large number of simulation repetitions (100,000), there are relatively few distinct values (only a few dozen). These fewer discrete values ultimately lead to the non-smooth curve of the kernel density estimation for $ \sqrt{n}\varphi_n$ in Figure \ref{densitycurve} when $ n=10$, as well as the stepwise appearance of the empirical cumulative distribution function for $ \sqrt{n}\varphi_n$ in Figure \ref{cdfcurve} when $n=10$. However, as the sample size increases, this phenomenon gradually disappears. The asymptotic behaviors of all methods become similar and approach the theoretical normal limiting distribution.

The $p$-values presented in Table \ref{kstest} indicate that, except for the case with a small sample size ($n=10$), each pair of methods is approximately identically distributed. Furthermore, the two proposed asymptotic representations and the original Spearman's footrule correlation coefficient all exhibit approximate normal distributions. For the case of $n=10$, as long as  $\varphi_n$ is not involved, the above conclusion remains valid. However, for the KS test of $\varphi_n$, extremely small $p$-values are observed, suggesting that for these scenarios, there is sufficient evidence to conclude that the distribution of $\varphi_n$ differs from that of other methods and also deviates significantly from the normal distribution. This finding is consistent with the previous analysis.

\begin{figure}[!htb]
	\centering
	\includegraphics[width=1.0\columnwidth]{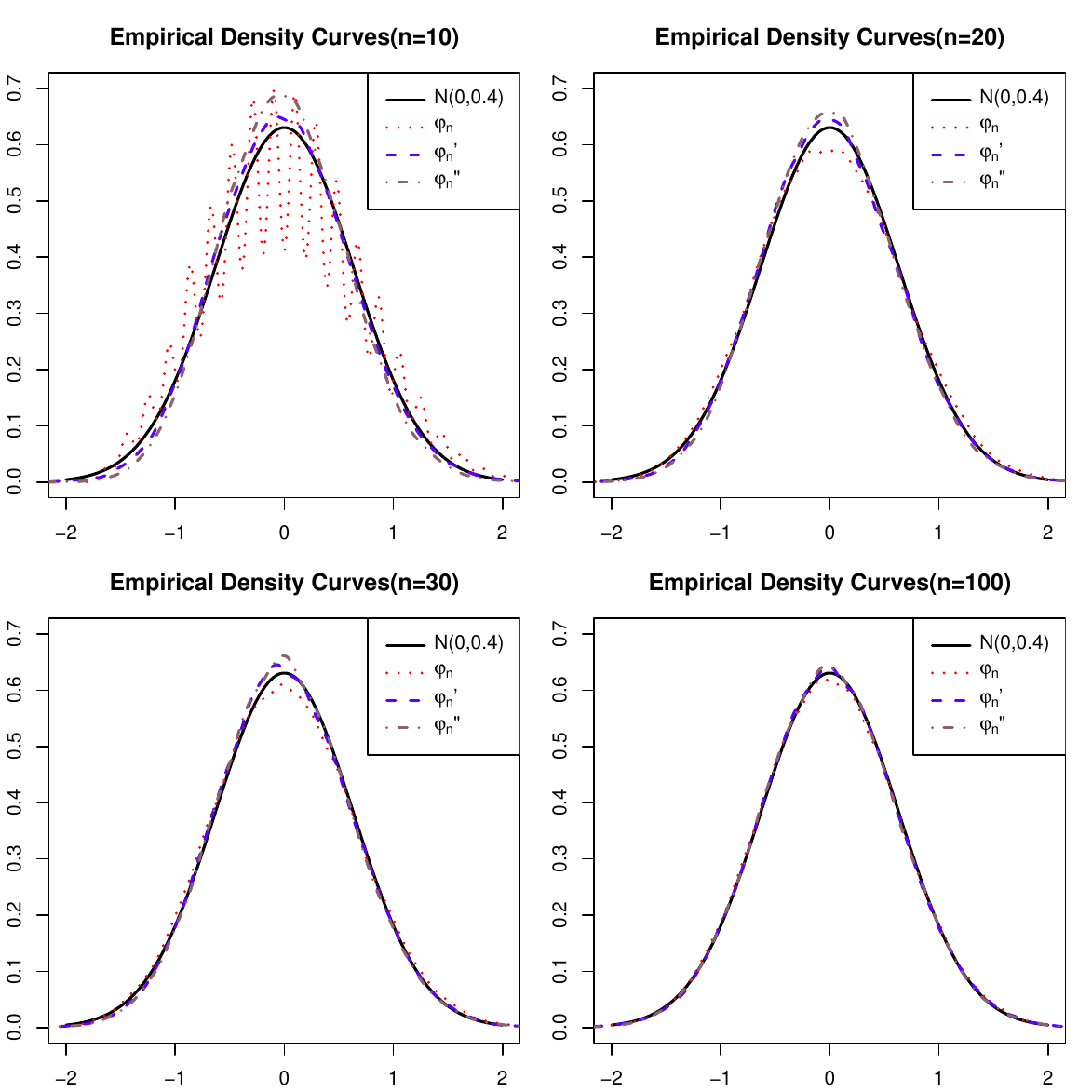}
	\caption{Empirical density curves of $\sqrt{n}\varphi_n$, $ \sqrt{n}\varphi_n^{\prime}$ and  $\sqrt{n} \varphi_n^{\prime\prime}$, estimated using kernel density estimation with a Gaussian kernel, where the solid line represents a normal curve with a mean of 0 and a variance of 0.4. }
	\label{densitycurve}
\end{figure}

\begin{figure}[!htb]
	\centering
	\includegraphics[width=1.0\columnwidth]{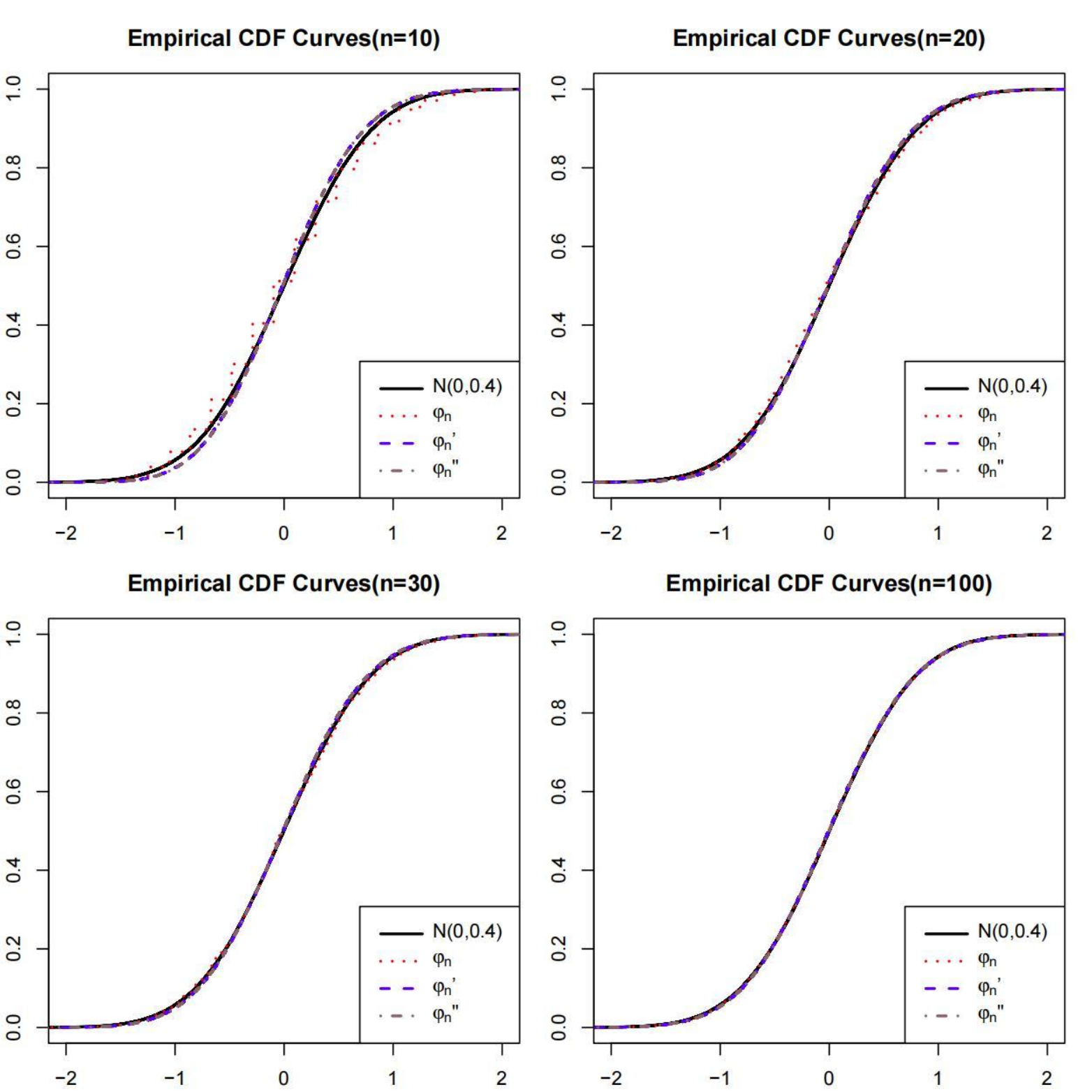}
	\caption{Empirical CDF curves of $\sqrt{n}\varphi_n$, $ \sqrt{n}\varphi_n^{\prime}$ and  $\sqrt{n} \varphi_n^{\prime\prime}$. The solid line represents a normal cumulative distribution function curve with a mean of 0 and a variance of 0.4.}
	\label{cdfcurve}
\end{figure}

\begin{table}[htbp]
	\centering
	\addtolength{\tabcolsep}{-3.5pt}
	\caption{The $p$-values of KS test for six combinations.}
	\begin{tabular}{ccccccccccc}
		\toprule
		& $n=10$  & $n=20$  & $n=30$  & $n=40$  & $n=50$  & $n=60$  & $n=70$  & $n=80$  & $n=90$  & $n=100$ \\
		\midrule
		$\varphi_n-N(0,0.4)$    & 0.00003  & 0.03328  & 0.08691  & 0.18112  & 0.18112  & 0.34100  & 0.10828  & 0.50036  & 0.36998  & 0.02246  \\
		$\varphi_n^{\prime}-N(0,0.4)$    & 0.31358  & 0.31358  & 0.14834  & 0.93558  & 0.79439  & 0.31358  & 0.60992  & 0.31358  & 0.64756  & 0.26338  \\
		$\varphi_n^{\prime\prime}-N(0,0.4)$    & 0.26338  & 0.68523  & 0.75910  & 0.40047  & 0.46577  & 0.85929  & 0.64756  & 0.09710  & 0.95406  & 0.43243  \\
		&       &       &       &       &       &       &       &       &       &  \\
		$\varphi_n-\varphi_n^{\prime}$    & 0.00103  & 0.04282  & 0.43243  & 0.24060  & 0.21933  & 0.46577  & 0.75910  & 0.98826  & 0.85929  & 0.31358  \\
		$\varphi_n-\varphi_n^{\prime\prime}$    & 0.00000  & 0.09710  & 0.07762  & 0.21933  & 0.03328  & 0.53605  & 0.43243  & 0.40047  & 0.34100  & 0.53605  \\
		$\varphi_n^{\prime}-\varphi_n^{\prime\prime}$    & 0.31358  & 0.72255  & 0.72255  & 0.82796  & 0.34100  & 0.13383  & 0.53605  & 0.36998  & 0.72255  & 0.68523  \\
		\bottomrule
	\end{tabular}%
	\label{kstest}%
\end{table}%

\section{Conclusions}\label{section6}
Spearman's footrule, despite its robustness, faces theoretical complexities due to rank dependencies. Two asymptotic representations address this issue under independence. The initial representation simplifies the statistic by using population distribution functions. The subsequent use of H\'{a}jek projection decomposes the footrule into independent components, reinforcing the asymptotic normality, thus enhancing its theoretical understanding.

\appendix
\section{Appendix}\label{appendixA}
\begin{lemma}\label{lemmaA1}
	Given that $U_1$, $V_1$, and $V_2$ are independently and identically distributed from the uniform distribution $U(0,1)$, through simple integral calculation, the following facts can be easily deduced:\\
	$\mathrm{E}|U_1-V_1|=\dfrac{1}{3} $, $\mathrm{E}\left( |U_1-V_1||U_1\right) =\dfrac{1}{2}-U_1(1-U_1) $.
	$\mathrm{E}(U_1 (1 -U_1 ))=\dfrac{1}{6}$, $ \mathrm{Var}(|U_1 -V_1 |)=\dfrac{1}{18} $,
	$ \mathrm{Var}(U_1 (1 -U_1 ))=\dfrac{1}{180}$, $ \mathrm{Cov}(|U_1 -V_1 |,U_1 (1 -U_1 ))=-\dfrac{1}{180}$, $ \mathrm{Cov}(|U_1 -V_1 |,|U_1 -V_2 |)=\dfrac{1}{180}$.
\end{lemma}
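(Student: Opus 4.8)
The plan is to make the conditional expectation $\mathrm{E}(|U_1-V_1|\mid U_1)$ the computational hub, so that every identity in the lemma collapses to a few raw moments of a single uniform variable. First I would fix $U_1=u$ and split the absolute value at $v=u$:
\begin{eqnarray*}
\mathrm{E}(|U_1-V_1|\mid U_1=u)=\int_0^u(u-v)\,dv+\int_u^1(v-u)\,dv=u^2-u+\tfrac12=\tfrac12-u(1-u),
\end{eqnarray*}
which is exactly the second stated fact. Abbreviating $W:=U_1(1-U_1)$, the tower property then reduces $\mathrm{E}|U_1-V_1|$, and indeed every expectation built from $|U_1-V_1|$, to moments of $U_1$ and of $W$.

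The next step is to tabulate the elementary moments $\mathrm{E}(U_1^k)=1/(k+1)$ for $k=1,\dots,4$, whence $\mathrm{E}(W)=\tfrac16$ and $\mathrm{E}(W^2)=\tfrac13-\tfrac12+\tfrac15=\tfrac1{30}$. From here the single-variable facts are immediate: the first and third identities are $\mathrm{E}|U_1-V_1|=\tfrac12-\mathrm{E}(W)=\tfrac13$ and $\mathrm{E}(W)=\tfrac16$; the fourth follows by computing $\mathrm{E}(U_1-V_1)^2=\tfrac16$ (using independence of $U_1,V_1$) and subtracting $(\tfrac13)^2$; the fifth is $\mathrm{Var}(W)=\mathrm{E}(W^2)-(\mathrm{E}W)^2=\tfrac1{30}-\tfrac1{36}=\tfrac1{180}$.

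For the two covariances I would route back through the hub. The sixth reduces, after conditioning on $U_1$, to $\mathrm{E}\big(W(\tfrac12-W)\big)=\tfrac1{20}$, giving covariance $\tfrac1{20}-\tfrac13\cdot\tfrac16=-\tfrac1{180}$. The seventh is the only step needing genuine care: because $V_1$ and $V_2$ are \emph{conditionally} independent given $U_1$ (a consequence of their mutual independence and joint independence of $U_1$), the mixed product factors as $\mathrm{E}(|U_1-V_1|\,|U_1-V_2|\mid U_1)=(\tfrac12-W)^2$, so that $\mathrm{E}(|U_1-V_1|\,|U_1-V_2|)=\tfrac14-\mathrm{E}(W)+\mathrm{E}(W^2)=\tfrac7{60}$ and the covariance is $\tfrac7{60}-\tfrac19=\tfrac1{180}$. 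Since each integral is a low-degree polynomial, there is no real analytic obstacle; the only pitfall to flag is that in the seventh identity one must not naively multiply the two \emph{unconditional} expectations, and the conditional factorization above is what makes the computation correct.
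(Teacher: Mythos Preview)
Your computations are all correct, and the conditional-expectation ``hub'' is exactly the right organizing principle; each of the seven identities checks out as written. The paper itself does not actually supply a proof of this lemma---it is stated with the remark that the facts follow ``through simple integral calculation'' and is then used as input to the proofs of Theorems~\ref{representation1} and~\ref{representation2}---so there is no authorial argument to compare against, and your write-up would stand as a complete justification.
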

\begin{lemma}[Lemma 19.24 in \cite{van2000asymptotic}]\label{lemmaA2}
	Suppose that  $\mathcal{F}$  is a  $P$ -Donsker class of measurable functions and  $f_{n}$  is a sequence of random functions that take their values in $ \mathcal{F}$  such that  $\int\left(f_{n}(x)-f(x)\right)^{2} d P(x)$  converges in probability to 0 for some  $f\in L_{2}(P)$ . Then  $\mathbb{G}_{n}\left(f_{n}-f\right) \xrightarrow{P} 0$  and hence  $\mathbb{G}_{n} f_{n} \rightsquigarrow \mathbb{G}_{P} f $.
\end{lemma}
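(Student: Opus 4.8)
The plan is to derive this as a standard consequence of the asymptotic equicontinuity that characterizes Donsker classes, and not by any direct moment computation. Throughout I write $\rho_P(g,h)=\|g-h\|_{P,2}$ for the $L_2(P)$ seminorm and recall that $\mathbb{G}_n g=n^{-1/2}\sum_{i=1}^n\bigl(g(X_i)-Pg\bigr)$ depends on $g$ only through its centered version. The hypothesis $\int(f_n-f)^2\,dP\xrightarrow{P}0$ is precisely $\rho_P(f_n,f)\xrightarrow{P}0$, so the entire task is to pass from $\rho_P$-closeness of the \emph{random} argument $f_n$ to $f$ to closeness of the empirical process evaluated there.

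First I would record the structural fact that does all the work: because $\mathcal{F}$ is $P$-Donsker, the sequence $\mathbb{G}_n$ is asymptotically $\rho_P$-equicontinuous in probability, i.e. for every $\epsilon>0$,
$$\lim_{\delta\downarrow0}\ \limsup_{n\to\infty} P^{*}\Bigl(\sup_{g,h\in\mathcal F,\ \rho_P(g,h)<\delta}|\mathbb{G}_n g-\mathbb{G}_n h|>\epsilon\Bigr)=0 .$$
Since $f$ is an $L_2(P)$-limit of members of $\mathcal F$ it lies in the $L_2(P)$-closure of $\mathcal F$; to make $f$ an admissible index I would replace $\mathcal F$ by $\mathcal F\cup\{f\}$, which is again Donsker because a singleton $\{f\}$ with $f\in L_2(P)$ is Donsker by the ordinary central limit theorem and finite unions of Donsker classes are Donsker. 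This also makes $\mathbb{G}_P f$ a well-defined coordinate of the limit process.

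Second, I would run the splitting argument. Fix $\epsilon,\eta>0$ and use the displayed equicontinuity to choose $\delta>0$ with $\limsup_n P^{*}\bigl(\sup_{\rho_P(g,h)<\delta}|\mathbb{G}_n(g-h)|>\epsilon\bigr)<\eta/2$. On the event $\{\rho_P(f_n,f)<\delta\}$ one has the pointwise bound $|\mathbb{G}_n(f_n-f)|\le\sup_{\rho_P(g,f)<\delta}|\mathbb{G}_n(g-f)|$, whence
$$P^{*}\bigl(|\mathbb{G}_n(f_n-f)|>\epsilon\bigr)\le P^{*}\Bigl(\sup_{\rho_P(g,f)<\delta}|\mathbb{G}_n(g-f)|>\epsilon\Bigr)+P\bigl(\rho_P(f_n,f)\ge\delta\bigr).$$
The second summand tends to $0$ by the hypothesis, while the $\limsup$ of the first is below $\eta/2$; letting $\eta,\epsilon\downarrow0$ yields $\mathbb{G}_n(f_n-f)\xrightarrow{P}0$, the first assertion.

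Finally, the distributional conclusion follows by decomposing $\mathbb{G}_n f_n=\mathbb{G}_n(f_n-f)+\mathbb{G}_n f$. The first term is $o_P(1)$ by the previous step, and $\mathbb{G}_n f\rightsquigarrow\mathbb{G}_P f$ is the one-dimensional marginal of $\mathbb{G}_n\rightsquigarrow\mathbb{G}_P$ in $\ell^\infty(\mathcal F\cup\{f\})$ (coordinate evaluation at $f$ being continuous), so Slutsky's lemma gives $\mathbb{G}_n f_n\rightsquigarrow\mathbb{G}_P f$. I expect the main obstacle to be bookkeeping rather than depth: the argument $f_n$ is random, so $\mathbb{G}_n(f_n-f)$ is the empirical process read off at a data-dependent point, and one must dominate it by a genuine supremum over $\mathcal F$ and work with outer probabilities $P^{*}$ because that supremum need not be measurable. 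Establishing that $f$ may be taken as a legitimate index (the closure / union-of-Donsker-classes step) and invoking equicontinuity in exactly the right $\epsilon$--$\eta$ form are the two places where care is needed; the remainder is the routine triangle-inequality split above.
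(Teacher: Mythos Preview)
Your argument is correct and is essentially the standard proof of this result via asymptotic $\rho_P$-equicontinuity, the $\epsilon$--$\delta$ split on $\{\rho_P(f_n,f)<\delta\}$, and Slutsky's lemma. Note, however, that the paper does not supply its own proof of this lemma: it is quoted verbatim as Lemma~19.24 of \cite{van2000asymptotic} and simply invoked in the proof of Theorem~\ref{representation1}, so there is no in-paper proof to compare against---your write-up just reproduces the textbook argument.
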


\begin{proof}[\textbf{Proof of Theorem \ref{representation1}}]
	Let $ (X_1,Y_1), \ldots, (X_n,Y_n)\in  \mathcal{Z}=\mathbb{R} \times  \mathbb{R} $  be a random sample from a probability distribution  $P$  defined on a measurable space  $(\mathcal{Z}, \mathcal{A})$. We denote two empirical distributions as $ \mathbb{P}_{n}=n^{-1} \sum_{i=1}^{n} \delta_{(X_i,Y_i)} $ and $ \mathbb{P}_{n}^\prime=n^{-2} \sum_{i=1}^{n} \sum_{j=1}^{n} \delta_{(X_i,Y_j)} $, where  $\delta_{(x,y)}$  represents the probability distribution degenerate at the point  $(x,y)$. For a given measurable function $ f: \mathcal{Z} \mapsto \mathbb{R} $, we  use $ \mathbb{P}_{n} f$ and $ \mathbb{P}_{n}^\prime f$ to denote  the expectations of  $f$  under the empirical measures $\mathbb{P}_{n}$ and $\mathbb{P}_{n}^\prime$, respectively. Similarly, $ P f$  represents the expectation of $f$ under  $P(x,y)=F(x)G(y)$. Thus, the expressions are given by:
	\begin{eqnarray}\label{Pnf_Pf}
		\mathbb{P}_{n} f=\frac{1}{n} \sum^{n}_{i=1} f\left(X_i,Y_i\right),\mathbb{P}^{\prime}_{n} f=\frac{1}{n^2} \sum^{n}_{i=1} \sum^{n}_{j=1} f\left(X_i,Y_j\right), \quad P f=\int f d P.
	\end{eqnarray}
	We choose
	\begin{eqnarray}\label{f_fn}
		f(x,y)=|F(x)-G(y)| \ \    \text{and } \ \ f_n(x,y)=|F_n(x)-G_n(y)|,
	\end{eqnarray}
	where $F_n(x)=\frac{1}{n}\sum_{k=1}^n\mathbb{I}(X_k\leqslant x)$ and $G_n(y)=\frac{1}{n}\sum_{k=1}^n\mathbb{I}(Y_k\leqslant y)$.
	
	Let $\mathcal{F}_0$ be the collection of cumulative distribution functions for all univariate continuous variables and   $\mathcal{F}=\{|F(x)-G(y)|\in\mathbb{R}:F,G\in\mathcal{F}_0 \text{ and } x,y \in  \mathbb{R}\}$. Example 19.6 of \cite{van2000asymptotic} illustrates that $\mathcal{F}_0$ is a $P$-Donsker class. Thus, for all $(x,y) \in \mathbb{R}^2$ and $F, G \in \mathcal{F}_0$, according to the information provided on page 19 of \cite{kosorok2008introduction}, along with the fact that $|a-b|=\mathrm{max}\{a,b\}-\mathrm{min}\{a,b\}$,  $\mathcal{F}$ is also a $P$-Donsker class.

	By the law of large numbers, for every $x$ and $y$, it is apparent that $\mathrm{Sup}_{x\in R}|F_n(x)-F(x)|\xrightarrow{a.s.}0$ and $\mathrm{Sup}_{y\in R}|G_n(y)-G(y)|\xrightarrow{a.s.}0$ hold, thus resulting in $\mathrm{Sup}_{x\in R,y \in R}|f_n(x,y)-f(x,y)|\xrightarrow{a.s.}0$, hence for some  $f \in L_{2}(P) $, $\int(f_{n}(x)-f)^{2}d P \xrightarrow{p}0$ follows.
	Then, define
	$$\mathbb{G}_n:=\sqrt{n}(\mathbb{P}_{n}-P),\ \  \mathbb{G}^\prime_n:=\sqrt{n}(\mathbb{P}_{n}^\prime-P). $$
	The empirical process evaluated at $f$ is  $$\mathbb{G}_nf=\sqrt{n}(\mathbb{P}_{n}f-Pf),\ \  \mathbb{G}_n^\prime f=\sqrt{n}(\mathbb{P}_{n}^\prime f-Pf). $$

	Thus, based on previous analysis, by directly applying Lemma \ref{lemmaA2} (see also Lemma 19.24 in \cite{van2000asymptotic}), one has
	$$\mathbb{G}_n^\prime\left(  f_n-f \right)=o_p(1),\ \  \mathbb{G}_n\left(  f_n-f \right)=o_p(1), $$
	i.e.,
	$$\left( \mathbb{P}_{n}^\prime-P\right) \left(  f_n-f \right)=o_p(n^{-1/2}),\ \ \left( \mathbb{P}_{n}-P\right) \left(  f_n-f \right)=o_p(n^{-1/2}). $$
	Further expanding these two expressions leads to the following forms,
	$$\mathbb{P}_{n}^\prime f_n-\mathbb{P}_{n}^\prime  f-P f_n+Pf =o_p(n^{-1/2}),\ \ \mathbb{P}_{n} f_n-\mathbb{P}_{n}  f-P f_n+Pf =o_p(n^{-1/2}). $$
	Note that the combination of equations (\ref{Pnf_Pf}) and (\ref{f_fn}) can yield the following forms:
	$$\mathbb{P}_{n}^\prime f_n=\dfrac{1}{n^2}\sum_{i=1}^n\sum_{j=1}^n\left|F_n(X_i)-G_n(Y_j) \right|,\ \ \mathbb{P}_{n} f_n=\dfrac{1}{n}\sum_{i=1}^n\left|F_n(X_i)-G_n(Y_i) \right|.$$
	
	$$\mathbb{P}_{n}^\prime f=\dfrac{1}{n^2}\sum_{i=1}^n\sum_{j=1}^n\left|F(X_i)-G(Y_j) \right|=\dfrac{1}{n^2}\sum_{i=1}^n\sum_{j=1}^n\left|U_i-V_j \right|.$$
	$$\mathbb{P}_{n}f=\dfrac{1}{n}\sum_{i=1}^n\left|F(X_i)-G(Y_i) \right|=\dfrac{1}{n}\sum_{i=1}^n\left|U_i-V_i \right|.$$
	
	$$Pf_n=\mathrm{E}\left|F_n(X_i)-G_n(Y_i) \right|,\ \  Pf=\mathrm{E}\left|F(X_i)-G(Y_i) \right|=\mathrm{E}\left|U_i-V_i \right|.$$
	Thus,
	$$\dfrac{1}{n^2}\sum_{i=1}^n\sum_{j=1}^n\left|F_n(X_i)-G_n(Y_j) \right|=\dfrac{1}{n^2}\sum_{i=1}^n\sum_{j=1}^n\left|U_i-V_j \right|+Pf_n-Pf+o_p(n^{-1/2}),$$
	$$\dfrac{1}{n}\sum_{i=1}^n\left|F_n(X_i)-G_n(Y_i) \right|=\dfrac{1}{n}\sum_{i=1}^n\left|U_i-V_i \right|+Pf_n-Pf+o_p(n^{-1/2}).$$
	Combining the above equations, we obtain
	\begin{eqnarray*}
		\varphi_n&=&1-\dfrac{3}{n^2-1}\sum_{i=1}^n|R_i-S_i|\\
		&=&\dfrac{3}{n^2-1}\left( \dfrac{n^2-1}{3}-\sum_{i=1}^n|R_i-S_i|\right) \\
		&=&\dfrac{3}{n^2-1}\left(\dfrac{1}{n}\sum_{i=1}^n \sum_{j=1}^n|R_i-S_j|-\sum_{i=1}^n|R_i-S_i| \right)\\
		&=&\dfrac{3n^2}{n^2-1}\left(\dfrac{1}{n^2}\sum_{i=1}^n \sum_{j=1}^n|   F_n(X_i)-G_n(Y_i) |-\dfrac{1}{n}\sum_{i=1}^n|F_n(X_i)-G_n(Y_i) | \right)\\
		&=&\dfrac{3n^2}{n^2-1}\left(\dfrac{1}{n^2}\sum_{i=1}^n\sum_{j=1}^n\left|U_i-V_j \right|-\dfrac{1}{n}\sum_{i=1}^n\left|U_i-V_i \right|+o_p(n^{-1/2}) \right)\\
		&=&	\varphi_n^{\prime}+o_p(n^{-1/2}).
	\end{eqnarray*}

	Next, relying on the facts stated in Lemma \ref{lemmaA1}, we deal with the expectation and variance of $\varphi_n^{\prime}$. Let $C_1=\dfrac{1}{n^2}\sum_{i=1}^n\sum_{j=1}^n\left|U_i-V_j \right|$ and $C_2=\dfrac{1}{n}\sum_{i=1}^n\left|U_i-V_i \right|$, it is easy to calculate that
	$\mathrm{E}\varphi_n^{\prime}=0.$
	
	Furthermore, routine calculation yields
	\begin{eqnarray*}
		\mathrm{Var}(C_1)&=&\dfrac{1}{n^4}\left\lbrace n^2\mathrm{Var}(\left|U_1-V_1 \right|)+2\times n^2\times (n-1)\mathrm{Cov}(|U_1 -V_1 |,|U_1 -V_2 |) \right\rbrace \\
		&=&\dfrac{1}{n^4}\left\lbrace n^2 \times \dfrac{1}{18} +2 n^2 (n-1)\times\dfrac{1}{180}\right\rbrace\\
		&=&\dfrac{n+4}{90n^2}.\\
		\mathrm{Var}(C_2)&=&\dfrac{1}{n^2}\times n \mathrm{Var}(\left|U_1-V_1 \right|)=\dfrac{1}{18n}.\\
		\mathrm{Cov}(C_1,C_2)&=&\dfrac{1}{n^3}\left( \sum_{i=1}^n\sum_{j=1}^n\left|U_i-V_j \right|,\sum_{i=1}^n\left|U_i-V_i \right| \right) \\
		&=&\dfrac{1}{n^3}\left\lbrace n\mathrm{Var}(\left|U_1-V_1 \right|)+2(n-1) \mathrm{Cov}(|U_1 -V_1 |,|U_1 -V_2 |)\times n \right\rbrace \\
		&=& \dfrac{1}{n^3}\left\lbrace n \times\dfrac{1}{18}+2n(n-1)\times\dfrac{1}{180} \right\rbrace \\
		&=& \dfrac{n+4}{90n^2}.
	\end{eqnarray*}
	Ultimately, it is derived that
	\begin{eqnarray*}
		\mathrm{Var}(\varphi_n^{\prime})&=&\left( \dfrac{3n^2}{n^2-1}\right)^2\left[ \mathrm{Var}(C_1)+\mathrm{Var}(C_2) -2\mathrm{Cov}(C_1,C_2)\right] \\
		&=&\left( \dfrac{3n^2}{n^2-1}\right)^2\left(\dfrac{n+4}{90n^2}+ \dfrac{1}{18n}-2\times \dfrac{n+4}{90n^2} \right) \\
		&=& \dfrac{2n^2}{5(n+1)^2(n-1)}.
	\end{eqnarray*}
	
	The proof of Theorem \ref{representation1} is now complete.
\end{proof}

\begin{proof}[\textbf{Proof of Theorem \ref{representation2}}]
	Rewrite $\varphi_n^{\prime}$ as
	\begin{equation}\label{rewrite_widetilde_varphi}
		\begin{aligned}
			\varphi_n^{\prime} &=\dfrac{3}{n^2-1}\left(\sum_{i=1}^n \sum_{j=1}^n|U_i-V_j|-n\sum_{i=1}^n|U_i-V_i| \right)\\
			&= \dfrac{3}{n^2-1}\left(\sum_{i\neq j}^n |U_i-V_j|-(n-1)\sum_{i=1}^n|U_i-V_i| \right) \\
			&=\dfrac{3}{n^2-1}\left(\dfrac{n(n-1)}{2}T_1-(n-1)T_2\right),
		\end{aligned}
	\end{equation}
	where $T_1=\dfrac{2}{n(n-1)}\sum_{i\neq j}^n |U_i-V_j|$, and  $T_2=\sum_{i=1}^n|U_i-V_i| $.

	Since $T_2$ is already a sum of independent and identically distributed terms, its H\'{a}jek projection remains itself. Thus, we only need to calculate the H\'{a}jek representation for $T_1$.
	
	In fact, $T_1$ is a U-statistic that can be expressed as
	$$T_1=\dfrac{2}{n(n-1)}\sum_{i\neq j}^n |U_i-V_j|=\dfrac{2}{n(n-1)}\sum_{i< j}^n h\left(\left(U_i,V_i \right)^\top ,\left(U_j,V_j \right)^\top  \right),  $$
	where the symmetric kernel function is taken as
	$$ h\left(\left(u_1,v_1 \right)^\top ,\left(u_2,v_2 \right)^\top  \right)=|u_1-v_2|+|u_2-v_1|.$$
	It is evident that the variance of  $T_1$ exists. Let $\theta=\mathrm{E}\left[ h\left(\left(U_i,V_i \right)^\top ,\left(U_j,V_j \right)^\top  \right)\right] $, and  $h_1\left(\left(u,v \right)^\top \right)=\mathrm{E}\left[h\left(\left(u,v \right)^\top ,\left(U_2,V_2 \right)^\top  \right)\right]-\theta  $. According to Lemma  \ref{lemmaA1} and through simple derivation, we have  $\theta = E(|U_1 - V_2| + |U_2 - V_1|)=\dfrac{2}{3}$ and $h_1\left(\left(u,v \right)^\top \right) = \dfrac{1}{3} - u (1 - u)-v (1 - v)$.  The projection of $T_1 - \dfrac{2}{3}$ is then given by
	\begin{eqnarray*}
		\widetilde{T}_1:=\dfrac{2}{n}\sum_{i=1}^n\left[ \dfrac{1}{3} - U_i (1 - U_i)-V_i (1 - V_i) \right].
	\end{eqnarray*}
	Then, by applying Lemma 12.3 from \cite{van2000asymptotic}, we obtain
	$$ T_1 - \dfrac{2}{3}=\widetilde{T}_1+O_p(\dfrac{1}{n}),$$
	i.e.,
	$$ T_1 =\dfrac{2}{n}\sum_{i=1}^n\left[ \dfrac{1}{3} - U_i (1 - U_i)-V_i (1 - V_i) \right]+\dfrac{2}{3}+O_p(\dfrac{1}{n}).$$
	
	Substituting this result into Equation (\ref{rewrite_widetilde_varphi}), we get
	\begin{equation*}
		\begin{aligned}
			\varphi_n^{\prime} &=\dfrac{3}{n+1}\sum_{i=1}^n\left( \dfrac{2}{3}-|U_i-V_i|-U_i(1-U_i)-V_i(1-V_i)\right)+O_p(\dfrac{1}{n}) \\
			&=\varphi_n^{\prime\prime}+O_p(\dfrac{1}{n}) .
		\end{aligned}
	\end{equation*}
	Additionally, utilizing the results from Lemma \ref{lemmaA1}, it is easy to derive that
	\begin{eqnarray*}
		\mathrm{E}\varphi_n^{\prime\prime}=0,
	\end{eqnarray*}	
	\begin{eqnarray*}
		\mathrm{Var}(\varphi_n^{\prime\prime})&=&\left( \dfrac{3}{n+1}\right)^2\times n\mathrm{Var}\left( \dfrac{2}{3}-|U_i-V_i|-U_i(1-U_i)-V_i(1-V_i)\right) \\
		&=&n\left( \dfrac{3}{n+1}\right)^2\times \left[\mathrm{Var}(|U_1-V_1|)+\mathrm{Var}(U_1(1-U_1))\times 2+ 2\mathrm{Cov}(|U_1 -V_1 |,U_1 (1 -U_1 )) \right] \\
		&=&n\left( \dfrac{3}{n+1}\right)^2\times \left( \dfrac{1}{18}+\dfrac{1}{180}\times 2+2\times\left( -\dfrac{1}{180}\right) \times 2 \right) \\
		&=&\dfrac{2n}{5(n+1)^2}.
	\end{eqnarray*}
	Thus, this proof is complete.
\end{proof}

\begin{proof}[\textbf{Proof of Theorem \ref{theorem2.3}}]
	$\varphi_n^{\prime\prime}$ can be expressed as the sum of independently and identically distributed random variables with existing second moment, so its asymptotic normality can be easily obtained through the ordinary central limit theorem. By further utilizing the asymptotic representations of Theorem \ref{representation1} and Theorem \ref{representation2}, the asymptotic normality of $\varphi_n^{\prime}$ and $\varphi_n$ is also apparent.
\end{proof}

\newpage

\bibliographystyle{elsarticle-harv}
\bibliography{Representations}
\end{document}